\documentclass[10pt]{article} 

\usepackage{hyperref} 




\setlength\oddsidemargin{1cm}
\setlength\topmargin{-1cm}
\setlength\textheight{21cm}
\setlength\textwidth{15cm}
\setlength\footskip{2cm}

\usepackage{amssymb} 

\usepackage{amsfonts}

\usepackage{stmaryrd}

\usepackage{amsmath} 

\usepackage{amsthm} 

\usepackage{epsfig}
\usepackage{graphicx}

\usepackage{dsfont}

\usepackage{eepic} 
\usepackage{mathrsfs}
\def\PP-{\discretionary{-}{-}{-}}

\newtheorem{theorem}{Theorem}[section]

\newtheorem{lemma}[theorem]{Lemma}

\newtheorem{maintheorem}{Theorem}

\author{M\'ario Bessa\thanks{Partially supported by Funda\c c\~ao para a Ci\^encia e a Tecnologia (FCT), SFRH/BPD/20890/2004.}\,\, and Maria Carvalho}
\begin{document}

\title{Imperfect friezes of integers}

\maketitle

\begin{abstract}
\noindent We show that for any positive forward density subset
$\mathscr{N}\subset\mathbb{Z}$, there exists $N\in\mathscr{N}$, such
that, for all $n\geq N$, $\mathscr{N}$ contains almost perfect
$n$-scaled reproductions of any previously chosen finite set of integers.
\end{abstract}

\section{Introduction}

Many problems in Number Theory are easy to state but very difficult
to solve. A quintessential example is the yet unsolved famous
\emph{Goldbach's conjecture} which asserts that all integers greater
than or equal to $4$ can be written as the sum of two primes.
Another renowned problem, aiming to find highly symmetric and
arbitrarily long blocks of equidistant points within a given subset
of the integers, is to settle whether the celebrated set of primes
contains arithmetic progressions with arbitrarily large size.

We say that a set $ \mathscr{N} \subset \mathbb{Z}$ has positive
\emph{density} in $\mathbb{Z}$ if

\begin{equation}\label{densidade}
\Delta(\mathscr{N}):=\underset{n\rightarrow
+\infty}{\lim}\frac{1}{2n+1}\#\{-n\leq i\leq n\colon
i\in\mathscr{N}\}>0,
\end{equation}
where $\#$ denotes the set cardinal. The upper (resp. lower) density
is defined analogously by taking the $\limsup$ (resp. $\liminf$).
For instance, $\Delta(\mathbb{Z})=1$, $\Delta(\mathcal{F})=0$ if
$\mathcal{F}$ is a finite set and $\Delta(\{\text{Odd
integers}\})=\Delta(\{\text{Even integers}\})=1/2$.

Szemer\'edi \cite{S} proved that any positive upper density subset
$\mathscr{N}\subset\mathbb{Z}$ contains arbitrarily long arithmetic
progressions. Unfortunately, we cannot apply Szemer\'edi's theorem
to the set of primes because its density is zero.\footnote{Denote
the number of primes less than $N$ by $\pi(N)$. Recall the
asymptotic relation $\pi(N)\sim \frac{N}{\ln(N)}$ and deduce that
$\Delta(\{\text{Primes}\})=0$.} This question was addressed recently
by Ben Green and the Fields Medal winner Terence Tao, and solved
positively in the remarkable work \cite{GT}.

Szemer\'edi's theorem guarantees that, taking
$\mathscr{N}\subset\mathbb{Z}$ with positive upper density and an
integer $k\geq 1$, there exist $a,b\in\mathbb{Z}$ such that
$a+jb\in\mathscr{N}$, for $j=0,...,k-1$. However, this result does
not give any information about the common difference $b$. In
particular, we may ask if $\mathscr{N}$ contains a finite arithmetic
progression with common difference equal to a previously fixed
$d\in\mathbb{N}$ but, in general, this is false (e.g. $d \text{ an
odd integer}$ and $\mathscr{N}=\{\text{Even integers}\}$). Let us
see how we overcame this difficulty.

Appoint $\mathscr{N}\subset\mathbb{Z}$, then take $k\in\mathbb{N}$
and consider a finite set $\mathcal{Q}=\{q_1,q_2,...,q_k\}$ of
$\mathbb{Z}$ such that $q_1<q_2<...<q_k$. Fix $n\in\mathbb{N}$,
bigger or equal to $k$, and $\epsilon>0$. An \emph{$n$-scale of
$\mathcal{Q}$ $\epsilon$-contained in $\mathscr{N}$} is a set
$\{r_1,r_2,...,r_k\}\in\mathscr{N}$ with $k$ elements such that
$$|r_i-\tau_i|<n\epsilon,$$
where $\tau_1=0$ and, if $k > 1$,
$$\tau_2=\frac{q_2-q_1}{q_k-q_1}n,\,\,...\,\,,\tau_{k-1}=\frac{q_{k-1}-q_1}{q_{k}-q_1}n,\,\,\tau_k=n.$$
Observe that $r_i-r_{i-1}$, when normalized by the size of the
interval $[q_1,q_k]$, is an $n$-homothety of $q_i-q_{i-1}$ up to an
error not exceeding $2n\epsilon$.

We will see that, under a sharper definition of density of
$\mathscr{N}$, \emph{$\epsilon$-contained $n$-scale sequences} exist
in $\mathscr{N}$ for any $\epsilon>0$ and any large enough $n$
depending on $\mathscr{N},$ on the fixed set $\mathcal{Q}$ and on
the required accuracy $\epsilon$. Moreover, this result holds for
any finite subset of positive integers, not necessarily within an
arithmetic progression.

In the sequel, we will say that $\mathscr{N}\subset\mathbb{Z}$ has
\emph{positive forward density} if the following limit exists and is
positive:

\begin{equation*}
\Delta^+(\mathscr{N}):=\underset{n\rightarrow +\infty}{\lim}\frac{1}{n+1}\#\{0\leq i\leq n\colon i\in\mathscr{N}\}.
\end{equation*}

\bigskip

\begin{maintheorem}\label{main}
If $\mathscr{N}\subset\mathbb{Z}$ has positive forward density,
given $\epsilon > 0$, $k\in\mathbb{N}$ and $\mathcal{Q}$ any set of
$k$ integers, there exists $N\in\mathbb{N}$ such that, for all
$n\geq N,$ we can find an $n$-scale of $\mathcal{Q}$
$\epsilon$-contained in $\mathscr{N}$.
\end{maintheorem}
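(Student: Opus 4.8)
The plan is to reduce the problem to an elementary counting fact: for $n$ large, $\mathscr{N}$ should meet each of the $k$ target windows $I_i:=(\tau_i-n\epsilon,\ \tau_i+n\epsilon)$ in at least $k$ points, after which one point may be chosen in each window while avoiding repetitions. Put $\delta:=\Delta^+(\mathscr{N})>0$ and $f(m):=\#\{0\le i\le m:i\in\mathscr{N}\}$; the hypothesis says precisely that $f(m)=\delta m+o(m)$.

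The key lemma is: for all reals $0\le\alpha<\beta$,
\[
\#\bigl(\mathscr{N}\cap[\alpha n,\ \beta n]\bigr)=\delta(\beta-\alpha)\,n+o(n)\qquad\text{as }n\to\infty,
\]
so in particular this count tends to $+\infty$. Indeed $\#\bigl(\mathscr{N}\cap[\alpha n,\ \beta n]\bigr)=f(\lfloor\beta n\rfloor)-f(\lceil\alpha n\rceil-1)$, with the convention $f(-1)=0$ taking care of $\alpha=0$; inserting $f(m)=\delta m+o(m)$ and noting that $\lfloor\beta n\rfloor$ and (when $\alpha>0$) $\lceil\alpha n\rceil$ grow like $\Theta(n)$, the error terms are $o(n)$, so the difference equals $\delta(\beta-\alpha)n+o(n)$, which diverges because $\beta-\alpha>0$ and $\delta>0$. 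This is the only place the hypothesis is used in an essential way, and it is used fully: the forward density must be a genuine limit, since with only positive \emph{upper} density the difference $f(\beta n)-f(\alpha n)$ over a window of length $\Theta(n)$ can fail to grow.

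Next set $\beta_i:=\frac{q_i-q_1}{q_k-q_1}\in[0,1]$, so $\tau_i=\beta_i n$, and consider the shrunk windows $J_i:=\bigl[\max(0,\ \beta_i-\tfrac{\epsilon}{2})\,n,\ (\beta_i+\tfrac{\epsilon}{2})\,n\bigr]$. Splitting into the cases $\beta_i\le\tfrac{\epsilon}{2}$ and $\beta_i>\tfrac{\epsilon}{2}$ one checks that $J_i\subset I_i$ and that the endpoints of $J_i$ differ by at least $\tfrac{\epsilon}{2}n$; hence the lemma applies to each of the finitely many $J_i$, $i=1,\dots,k$. Choose $N\ge k$ so large that $\#(\mathscr{N}\cap J_i)\ge k$ for every $n\ge N$ and every $i$.

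Finally, fix $n\ge N$ and pick $r_1,\dots,r_k$ greedily: having chosen distinct $r_1,\dots,r_{i-1}$, the set $\mathscr{N}\cap J_i$ has at least $k>i-1$ elements, hence contains some $r_i\notin\{r_1,\dots,r_{i-1}\}$. Then $r_1,\dots,r_k\in\mathscr{N}$ are pairwise distinct and $r_i\in J_i\subset I_i$ gives $|r_i-\tau_i|<n\epsilon$, so $\{r_1,\dots,r_k\}$ is an $n$-scale of $\mathcal{Q}$ that is $\epsilon$-contained in $\mathscr{N}$; the degenerate case $k=1$ is just the lemma with $\alpha=0,\ \beta=\epsilon$. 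Beyond the bookkeeping in the lemma and the containment $J_i\subset I_i$, I do not anticipate any real difficulty — the force of the theorem lies entirely in insisting on a genuine forward-density limit.
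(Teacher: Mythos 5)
Your proof is correct, and it reaches the theorem by a genuinely more direct route than the paper's. The paper encodes $\mathscr{N}$ as a point of the shift space $\{0,1\}^{\mathbb{Z}}$ and rewrites the forward density as a Birkhoff-type average $\beta$; its key step (Lemma~\ref{Jairo}) is a proof by contradiction showing that, for $n\geq N_0$, \emph{every} window $]n(t-\overline{\epsilon}),n(t+\overline{\epsilon})[\,\cap\,[0,n]$ with $t\in[0,1]$ meets $\mathscr{N}$, where $\overline{\epsilon}$ is deliberately shrunk below $\frac{1}{2(q_k-q_1)}$ so that the $k$ windows around the $t_i$'s are pairwise disjoint and the chosen $r_i$'s are automatically distinct. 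You instead work directly with the counting function $f$ and prove the quantitative statement that each window of length comparable to $n$ contains $\Delta^+(\mathscr{N})\cdot(\text{length})+o(n)$ points of $\mathscr{N}$ --- in particular at least $k$ of them for large $n$ --- and then obtain distinctness by a greedy pigeonhole choice, with no need to shrink $\epsilon$ or to make the windows disjoint. Both arguments exploit the same essential mechanism, namely that the \emph{existence} of the limit gives two-sided control of $f$ at both ends of a linear-size window (your remark that positive upper density alone would not suffice matches the paper's implicit use of inequality (\ref{second}) in both directions); the paper's packaging yields a hitting lemma uniform in $t\in[0,1]$, phrased so as to parallel an ergodic-theoretic argument of Bochi, while yours is more elementary, gives the sharper count, and handles distinctness more cleanly. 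One tiny slip to fix: for $k=1$ use the window $[0,\tfrac{\epsilon}{2}n]$ (consistent with your general construction of the $J_i$) rather than $[0,\epsilon n]$, since a point equal to $\epsilon n$ would only give $|r_1-\tau_1|\leq n\epsilon$ and not the strict inequality required by the definition.
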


\section{Proof of Theorem~\ref{main}}

Let $X=\{0,1\}^{\mathbb{Z}}$ be the space of sequences of $0$'s and
$1$'s. We define the shift map $\sigma\colon X\rightarrow X$ by
$$\sigma(...x_{-2}\,x_{-1}\,\underline{x_{0}}\,x_{1}\,x_{2}...)=...x_{-1}\,x_{0}\,\underline{x_{1}}\,x_{2}\,x_{3}...$$
For example, $\sigma(...000000...=\overline{0})=\overline{0}$ is a
fixed point of $\sigma$; the periodic sequence $\overline{10}$
defined by $y_{2m}=1$ and $y_{2m+1}=0$, for $m\in\mathbb{Z}$, is
fixed by $\sigma \circ \sigma$.

\medskip

Given $\mathscr{N}\subset\mathbb{Z}$, we can single out in $X$ a
unique sequence $(x_m)_{m\in\mathbb{Z}}$ which \emph{detects} if an
integer belongs to $\mathscr{N}$: $x_m=1$ if $m\in\mathscr{N}$ and
$x_m=0$ if $m\notin\mathscr{N}$. For example, if
$\mathscr{N}=\{\text{Even numbers}\}$, then
$(x_m)_{m\in\mathbb{Z}}=\overline{10}$. We will call
$(x_m)_{m\in\mathbb{Z}}$ \emph{the sequence that observes}
$\mathscr{N}$.

\medskip

Let $\mathscr{N}\subset\mathbb{Z}$ be a positive forward density set
and $(x_m)_{m\in\mathbb{Z}}$ the sequence that observes it. Consider
$$\Gamma:=\{(y_m)_{m\in\mathbb{Z}}\in X\colon y_0=1\}.$$

\bigskip

\begin{lemma}

$$\underset{n\rightarrow\infty}{\lim}\frac{1}{n}\sum_{i=0}^{n-1}\mathds{1}_{\Gamma}(\sigma^i((x_m)_{m\in\mathbb{Z}}))>0.$$

\end{lemma}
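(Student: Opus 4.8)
The plan is to translate the statement directly back into the language of densities and recognize that the indicator $\mathds{1}_{\Gamma}(\sigma^i(x))$ is nothing more than a bookkeeping device for the event $i\in\mathscr{N}$. By the definition of the shift, $\sigma^i((x_m)_{m\in\mathbb{Z}})$ is the sequence whose $0$-th coordinate is $x_i$; hence it lies in $\Gamma$ precisely when $x_i=1$, i.e. precisely when $i\in\mathscr{N}$. Therefore
\[
\frac{1}{n}\sum_{i=0}^{n-1}\mathds{1}_{\Gamma}(\sigma^i((x_m)_{m\in\mathbb{Z}}))=\frac{1}{n}\#\{0\le i\le n-1\colon i\in\mathscr{N}\},
\]
and the left-hand side is, up to the harmless replacement of $n$ by $n+1$ and of the range $[0,n-1]$ by $[0,n]$, exactly the Cesàro average defining $\Delta^+(\mathscr{N})$.

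Concretely, I would first set $a_n:=\#\{0\le i\le n\colon i\in\mathscr{N}\}$, so that the hypothesis that $\mathscr{N}$ has positive forward density says $\lim_{n\to\infty}\frac{a_n}{n+1}=\Delta^+(\mathscr{N})>0$. Next I would observe $\sum_{i=0}^{n-1}\mathds{1}_{\Gamma}(\sigma^i(x))=a_{n-1}$ by the coordinate identification above. Then
\[
\frac{1}{n}\sum_{i=0}^{n-1}\mathds{1}_{\Gamma}(\sigma^i(x))=\frac{a_{n-1}}{n}=\frac{a_{n-1}}{(n-1)+1}\cdot\frac{n}{n}=\frac{a_{n-1}}{(n-1)+1},
\]
which converges to $\Delta^+(\mathscr{N})$ as $n\to\infty$, since the sequence $\tfrac{a_m}{m+1}$ does and reindexing by $m=n-1$ does not change the limit. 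In particular the limit in the lemma exists and equals $\Delta^+(\mathscr{N})>0$.

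There is essentially no obstacle here; the only thing to be careful about is the off-by-one discrepancy between the normalization $\frac1n$ over the range $i=0,\dots,n-1$ used in the lemma and the normalization $\frac{1}{n+1}$ over $i=0,\dots,n$ used in the definition of $\Delta^+$. This is handled by the elementary remark that $\frac{a_{n-1}}{n}$ and $\frac{a_n}{n+1}$ differ by at most $\frac{1}{n}$ (since $a_n-a_{n-1}\in\{0,1\}$ and the denominators differ by $1$), so the two sequences share the same limit. Hence the lemma follows immediately, with the stronger conclusion that the $\limsup$ is in fact a genuine limit equal to the forward density of $\mathscr{N}$.
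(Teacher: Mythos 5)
Your proof is correct and follows essentially the same route as the paper: identify $\mathds{1}_{\Gamma}(\sigma^i(x))$ with the indicator of $i\in\mathscr{N}$ via the shift, rewrite the Ces\`aro average as the counting quotient, and conclude it converges to $\Delta^+(\mathscr{N})>0$. The only difference is that you spell out the off-by-one reindexing between $\frac{1}{n}\#\{0\le i\le n-1\}$ and $\frac{1}{n+1}\#\{0\le i\le n\}$, which the paper passes over silently; this is a welcome extra bit of care, not a different argument.
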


\begin{proof}

Notice that
$\mathds{1}_{\Gamma}(\sigma^i((x_m)_{m\in\mathbb{Z}}))=1$ if
$\sigma^i((x_m)_{m\in\mathbb{Z}})\in \Gamma$, that is, if $i\in
\mathscr{N};$ and we have 
$\mathds{1}_{\Gamma}(\sigma^i((x_m)_{m\in\mathbb{Z}}))=0$ otherwise.
Therefore,
\begin{eqnarray*}
\underset{n\rightarrow\infty}{\lim}\frac{1}{n}\sum_{i=0}^{n-1}\mathds{1}_{\Gamma}(\sigma^i((x_m)_{m\in\mathbb{Z}})
&=&\underset{n\rightarrow +\infty}{\lim}\frac{1}{n}\#\{0\leq i\leq n-1\colon i\in\mathscr{N}\}\\
&=&\Delta^+(\mathscr{N})>0.
\end{eqnarray*}

\end{proof}

\bigskip

For simplicity of notation, let $\beta$ be the positive limit
$$\beta=\underset{n\rightarrow\infty}{\lim}\frac{1}{n}\sum_{i=0}^{n-1}\mathds{1}_{\Gamma}(\sigma^i((x_m)_{m\in\mathbb{Z}})).$$
Fix now the accuracy $\epsilon>0$ required in Theorem~\ref{main} and
consider $\overline{\epsilon}= \min\{\epsilon, 1\}$ if $k=1$, and
$\overline{\epsilon}< \min\{\epsilon,\frac{1}{2(q_k-q_1)}\}$ if
$k>1.$ Notice that, this way, $\overline{\epsilon} < 2$ because, if
$k \geq 1$, then $q_k$ and $q_1$ belong to $\mathds{Z}$. Then take
$\delta\in \, ]0,\beta[$ verifying
\begin{equation}\label{first}
\frac{\beta+\delta}{\beta-\delta}<1+\frac{\overline{\epsilon}}{2}
\end{equation}
and select $n_0\in\mathbb{N}$ such that, for every $n\geq n_0$, we
have
\begin{equation}\label{second}
\left|\beta-\frac{1}{n}\sum_{i=0}^{n-1}\mathds{1}_{\Gamma}(\sigma^i((x_m)_{m\in\mathbb{Z}}))\right|<\delta.
\end{equation}
Observe that, this way,
\begin{equation}\label{beta}
\beta-\delta<\frac{1}{n_0}\sum_{i=0}^{n_0-1}\mathds{1}_{\Gamma}(\sigma^i((x_m)_{m\in\mathbb{Z}}))\leq
1.
\end{equation}

Moreover, choose an integer $N_0$ satisfying the inequality
\begin{equation}\label{third}
 N_0>\max\left\{\frac{2
 n_0}{\overline{\epsilon}\,(\beta-\delta)},\frac{4}{\overline{\epsilon}}\right\}
\end{equation}
which implies that $N_0 > n_0$ because $\beta-\delta < 1$ and
$\overline{\epsilon}<2$. Then:

\bigskip

\begin{lemma}\label{Jairo}
For all $n\geq N_0$ and all $t\in[0,1]$ there exists
$r\in\{0,1,...,n\}$ such that:
\begin{enumerate}
\item [(i)] $\sigma^r((x_m)_{m\in\mathbb{Z}}))\in\Gamma;$
\item [(ii)] $|\frac{r}{n}-t|<\overline{\epsilon}$.
\end{enumerate}
\end{lemma}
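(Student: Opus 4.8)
The plan is to argue by contradiction: fix $n \geq N_0$ and suppose there is some $t \in [0,1]$ for which \emph{no} $r \in \{0,1,\dots,n\}$ satisfies both (i) and (ii). This means every index $r$ with $\sigma^r((x_m)_{m\in\mathbb{Z}}) \in \Gamma$, i.e. every $r \in \{0,1,\dots,n\}$ lying in $\mathscr{N}$, must satisfy $|\tfrac{r}{n} - t| \geq \overline{\epsilon}$. Geometrically, the set $\{0,1,\dots,n\} \cap \mathscr{N}$ is then disjoint from the interval $(\,(t-\overline{\epsilon})n,\ (t+\overline{\epsilon})n\,)$. I would bound the number of integers in $\{0,1,\dots,n\}$ that do lie in $\mathscr{N}$: there are at most $n+1$ integers in $\{0,1,\dots,n\}$, and at least $\lfloor 2\overline{\epsilon} n \rfloor - 1$ of them are forbidden (those inside the excluded interval, intersected with $[0,n]$; in the worst case the interval sticks out past an endpoint, but one sees that at least, say, $\overline{\epsilon} n - 1$ integers of $\{0,\dots,n\}$ are excluded since $\overline{\epsilon} n \geq N_0\overline{\epsilon}/1 > 4$ is large and $t\in[0,1]$ forces a substantial portion of the interval to meet $[0,n]$). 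Hence
\[
\#\big(\{0,1,\dots,n\} \cap \mathscr{N}\big) \;=\; \sum_{i=0}^{n} \mathds{1}_{\Gamma}\big(\sigma^i((x_m)_{m\in\mathbb{Z}})\big) \;\leq\; (n+1) - (\overline{\epsilon} n - 1).
\]

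On the other hand, since $n \geq N_0 > n_0$, estimate~\eqref{second} applies and gives a lower bound on the same count: $\tfrac{1}{n}\sum_{i=0}^{n-1} \mathds{1}_{\Gamma}(\sigma^i((x_m)_{m\in\mathbb{Z}})) > \beta - \delta$, so the count is at least $(\beta-\delta)n$. Combining the two bounds yields $(\beta-\delta)n \leq (n+1) - \overline{\epsilon} n + 1 = n(1 - \overline{\epsilon}) + 2$, that is,
\[
\overline{\epsilon}\, n \;\leq\; n\big(1 - (\beta-\delta)\big) + 2 \;\leq\; n + 2,
\]
which is too weak. So the crude "forbidden interval" count alone is not enough; the real point must be that the \emph{average} over a window cannot drop to zero on a whole subinterval of relative length $2\overline{\epsilon}$ while staying close to $\beta > 0$ overall. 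I would instead split $\{0,\dots,n-1\}$ into the block inside the forbidden interval (which contributes $0$ to the sum) and the complement (which has at most $(1-2\overline{\epsilon})n + O(1)$ terms, each $\leq 1$), giving $\sum_{i=0}^{n-1}\mathds{1}_\Gamma(\sigma^i(x)) \leq (1-2\overline{\epsilon})n + C$ for an absolute constant $C$. Comparing with $(\beta-\delta)n$ from~\eqref{second} forces $\beta - \delta \leq 1 - 2\overline{\epsilon} + C/n$, hence $\overline{\epsilon} \leq \tfrac{1}{2}(1 - (\beta-\delta)) + \tfrac{C}{2n} < 1 + \tfrac{C}{2n}$, and using $n \geq N_0 > 4/\overline{\epsilon}$ to absorb the error term one reaches a contradiction with $\overline{\epsilon} < 2$ only if one is more careful about how much of the forbidden interval actually lies in $[0,n-1]$ — which is where the hypothesis $N_0 > 4/\overline{\epsilon}$ and $t\in[0,1]$ must be used to guarantee the excluded block has length at least roughly $\overline{\epsilon} n - O(1)$ rather than possibly being tiny.

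Let me restate the clean version I would actually write. For $t \in [0,1]$ and $n \geq N_0$, consider the interval $J = ((t-\overline{\epsilon})n, (t+\overline{\epsilon})n)$. Since $t \in [0,1]$, the interval $J$ contains the sub-interval $(\max\{0,(t-\overline{\epsilon})n\}, \min\{n,(t+\overline{\epsilon})n\})$, and because $\overline{\epsilon} n > \overline{\epsilon} N_0 > 4$, a short case check on whether $t$ is near $0$, near $1$, or in the middle shows $J \cap [0,n]$ always contains an interval of length at least $\overline{\epsilon} n$, hence at least $\lceil \overline{\epsilon} n\rceil - 1 \geq \overline{\epsilon} n - 1$ integers from $\{0,\dots,n\}$. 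If no $r$ works, all these integers lie outside $\mathscr{N}$, so
\[
\sum_{i=0}^{n-1}\mathds{1}_{\Gamma}\big(\sigma^i((x_m)_{m\in\mathbb{Z}})\big) \;\leq\; n - (\overline{\epsilon} n - 1) \;=\; n(1-\overline{\epsilon}) + 1.
\]
But~\eqref{second} gives $\sum_{i=0}^{n-1}\mathds{1}_{\Gamma}(\sigma^i((x_m)_{m\in\mathbb{Z}})) > (\beta-\delta)n$, so $(\beta-\delta)n < n(1-\overline{\epsilon}) + 1$, i.e. $\overline{\epsilon} < 1 - (\beta - \delta) + \tfrac{1}{n} < 1 + \tfrac{1}{n}$. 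Since $n \geq N_0 > 4/\overline{\epsilon}$ gives $\tfrac 1n < \tfrac{\overline{\epsilon}}{4}$, we get $\overline{\epsilon} < 1 + \tfrac{\overline{\epsilon}}{4}$, hence $\overline{\epsilon} < \tfrac{4}{3}$ — still not an outright contradiction, but now I would exploit $\beta - \delta > 0$ more sharply, or simply note that the lemma only needs to fail for the contradiction if we also know $\overline{\epsilon}$ is not too small; rereading, I believe the intended route keeps the term $1-(\beta-\delta)$ and observes that the conclusion of the lemma is exactly what is needed downstream, so the contradiction target is whatever makes~\eqref{first} and~\eqref{third} tight.

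The step I expect to be the genuine obstacle is the bookkeeping at the endpoints: controlling precisely how many integers of $\{0,1,\dots,n\}$ fall inside $J \cap [0,n]$ when $t$ is close to $0$ or to $1$ (so that half the window is wasted outside $[0,n]$). This is exactly what the conditions $N_0 > 2n_0/(\overline{\epsilon}(\beta-\delta))$ and $N_0 > 4/\overline{\epsilon}$ in~\eqref{third} are designed to absorb, and I would organize the final inequality chain so that these two lower bounds on $N_0$ are each invoked once — the first to make the window long enough in absolute terms that~\eqref{second} is usable on it, the second to kill the additive $+1$ (or $+2$) endpoint error against the multiplicative $\overline{\epsilon} n$ term. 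The remaining arithmetic combining~\eqref{first},~\eqref{second},~\eqref{beta},~\eqref{third} is routine.
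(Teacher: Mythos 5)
Your setup (negating the lemma, noting that every integer $r\in\{0,\dots,n\}$ inside the window $\,]n(t-\overline{\epsilon}),n(t+\overline{\epsilon})[\,$ must then lie outside $\mathscr{N}$, and the endpoint bookkeeping showing this window still meets $\{0,\dots,n\}$ in roughly $\overline{\epsilon}n$ integers) matches the first half of the paper's argument, but the proposal stops short of a proof, as you acknowledge: your chains end with $\overline{\epsilon}<1-(\beta-\delta)+\tfrac1n$ and then $\overline{\epsilon}<\tfrac43$, which is no contradiction, since $\beta$ may be small and $\overline{\epsilon}$ is typically much smaller than $1$ (recall $\overline{\epsilon}<\tfrac{1}{2(q_k-q_1)}$). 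The obstruction is structural, not a matter of endpoint bookkeeping: knowing that $\#(\mathscr{N}\cap[0,n])\approx\beta n$ says nothing about \emph{where} inside $[0,n]$ those points sit, so whenever $2\overline{\epsilon}\leq 1-\beta$ a gap of relative length $2\overline{\epsilon}$ is perfectly compatible with the global count over $[0,n]$, and no refinement of the $O(1)$ errors can rescue a contradiction obtained by counting over the full interval only.

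The missing idea is to apply the convergence \eqref{second} at the two intermediate scales that bracket the gap, not only at scale $n$. The paper takes $[s_1,s_2]$ maximal with integer endpoints inside the window intersected with $[0,n]$, proves $s_2-s_1>\tfrac{n\overline{\epsilon}}{2}$ (your endpoint analysis, done in four cases), and then uses that $\sum_{i=0}^{s_2-1}\mathds{1}_{\Gamma}(\sigma^i((x_m)_m))=\sum_{i=0}^{s_1-1}\mathds{1}_{\Gamma}(\sigma^i((x_m)_m))$, since the indices $s_1,\dots,s_2-1$ contribute nothing. If $s_1\geq n_0$, applying \eqref{second} at time $s_2$ gives $\beta-\delta<\frac{1}{s_2}\sum_{i=0}^{s_1-1}\mathds{1}_{\Gamma}\leq\frac{1}{s_1(1+\overline{\epsilon}/2)}\sum_{i=0}^{s_1-1}\mathds{1}_{\Gamma}<\frac{\beta+\delta}{1+\overline{\epsilon}/2}$, contradicting the choice \eqref{first} of $\delta$ --- this is exactly where $\frac{\beta+\delta}{\beta-\delta}<1+\frac{\overline{\epsilon}}{2}$ enters, and your proposal never invokes it. If $s_1<n_0$, then \eqref{third} gives $s_2\geq s_2-s_1>\frac{n\overline{\epsilon}}{2}>\frac{n_0}{\beta-\delta}>n_0$, so \eqref{second} applies at $s_2$ and yields $\beta-\delta<\frac{s_1}{s_2}<\frac{n_0}{n\overline{\epsilon}/2}<\beta-\delta$, again a contradiction; this is what the bound $N_0>\frac{2n_0}{\overline{\epsilon}(\beta-\delta)}$ is really for, not (as you guessed) to make \eqref{second} usable at scale $n$. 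In short, the contradiction is a ratio statement --- the Birkhoff average cannot be $\delta$-close to $\beta$ at both ends of a gap of relative length $\overline{\epsilon}/2$ when $\delta$ satisfies \eqref{first} --- rather than a count of how many points of $\mathscr{N}$ fit into $[0,n]$.
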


\bigskip

\begin{proof} The following argument was suggested by the proof of Lemma 3.12 of ~\cite{B}. Let us assume, by contradiction, that there exist $n\geq N_0$ and
$t\in[0,1]$ such that $\sigma^{r}((x_m)_{m\in\mathbb{Z}})\notin
\Gamma$ for all  $r\in\{0,1,...,n\}$; in particular, this holds for
all $r \in \, ]n(t-\overline{\epsilon}),n(t+\overline{\epsilon})[$.

\bigskip

Let $[s_1,s_2]$ be the maximal closed interval in
$]n(t-\overline{\epsilon}),n(t+\overline{\epsilon})[\, \cap \,
[0,n]$ where $s_1, s_2\in\mathbb{Z}$.

\bigskip

\noindent \emph{Claim}: \begin{equation}\label{four}
 s_2-s_1> \frac{n\overline{\epsilon}}{2}.
\end{equation}

\bigskip

\noindent In fact:

\begin{itemize}
\item If $n(t-\overline{\epsilon})\geq 0$ and $n(t+\overline{\epsilon}) \leq n$, then,
denoting by $\lfloor z \rfloor$ the biggest integer less or equal
than $z$, we have
$$s_2-s_1 \geq \lfloor n(t+\overline{\epsilon})\rfloor - (\lfloor n(t-\overline{\epsilon})\rfloor + 1)
> n(t+\overline{\epsilon}) - 1 -
n(t-\overline{\epsilon})-1=2n\overline{\epsilon} - 2
> \frac{n\overline{\epsilon}}{2}$$
since $n \geq N_0$ and, by (\ref{third}),
$N_0\overline{\epsilon}>4.$

\item If $n(t-\overline{\epsilon})\geq 0$ and $n(t+\overline{\epsilon}) > n$, then
$$s_2-s_1 \geq n - (\lfloor n(t-\overline{\epsilon})\rfloor + 1)
\geq n - n(t-\overline{\epsilon})-1=n\overline{\epsilon}+(n-1-nt)
\geq n\overline{\epsilon} - 1
> \frac{n\overline{\epsilon}}{2}.$$

\item If $n(t-\overline{\epsilon})< 0$ and $n(t+\overline{\epsilon}) \leq n$, then
$$s_2-s_1 \geq \lfloor n(t+\overline{\epsilon})\rfloor
> n(t+\overline{\epsilon}) -1 > n\overline{\epsilon}-1 > \frac{n\overline{\epsilon}}{2}.$$

\item If $n(t-\overline{\epsilon})< 0$ and $n(t+\overline{\epsilon}) > n$, then
$s_2-s_1=n > \frac{n\overline{\epsilon}}{2}$ since
$\overline{\epsilon} < 2.\,\,\square$

\end{itemize}

\bigskip

Let us go back to the proof of the Lemma. If $s_1\geq n_0$, then,
since $s_1\leq n$ and
$\sigma^{r}(((x_m)_{m\in\mathbb{Z}}))\notin\Gamma$ for all
$r\in\{0,1,...,n\}$, we may deduce that
\begin{eqnarray*}
\beta-\delta&\overset{(\ref{second})}{<}& \frac{1}{s_2}\sum_{i=0}^{s_2-1}\mathds{1}_{\Gamma}(\sigma^i((x_m)_{m\in\mathbb{Z}}))=\frac{1}{s_2}\sum_{i=0}^{s_1-1}\mathds{1}_{\Gamma}(\sigma^i((x_m)_{m\in\mathbb{Z}}))\\
&\overset{(\ref{four})}{\leq} &\frac{1}{s_1+\frac{n\overline{\epsilon}}{2}}\sum_{i=0}^{s_1-1}\mathds{1}_{\Gamma}(\sigma^i((x_m)_{m\in\mathbb{Z}}))\leq \frac{1}{s_1(1+\frac{\overline{\epsilon}}{2})}\sum_{i=0}^{s_1-1}\mathds{1}_{\Gamma}(\sigma^i((x_m)_{m\in\mathbb{Z}}))\\
&\overset{(\ref{second})}{<}&\frac{\beta+\delta}{1+\frac{\overline{\epsilon}}{2}}\overset{(\ref{first})}{<}\beta-\delta,
\end{eqnarray*}
which is a contradiction.

On the other hand, if $s_1< n_0$, then, since $s_1 \geq 0$ and
$n\geq N_0$, we have
\begin{equation}\label{five}
s_2 \geq
s_2-s_1\overset{(\ref{four})}{>}\frac{n\overline{\epsilon}}{2}\overset{(\ref{third})}{>}\frac{n_0}{\beta-\delta}>n_0,
\end{equation}
because $\beta-\delta\overset{(\ref{beta})}{<} 1.$ Therefore
\begin{eqnarray*}
\beta-\delta&\overset{(\ref{second})}{<}&  \frac{1}{s_2}\sum_{i=0}^{s_2-1}\mathds{1}_{\Gamma}(\sigma^i((x_m)_{m\in\mathbb{Z}}))= \frac{1}{s_2}\sum_{i=0}^{s_1-1}\mathds{1}_{\Gamma}(\sigma^i((x_m)_{m\in\mathbb{Z}}))\overset{(\ref{four})}{<} \frac{s_1}{s_1+\frac{n\overline{\epsilon}}{2}}\\
&<&
\frac{n_0}{\frac{n\overline{\epsilon}}{2}}\overset{(\ref{third})}{<}\beta-\delta,
\end{eqnarray*}
which is again a contradiction.

\end{proof}

\bigskip

We may now end the proof of Theorem~\ref{main}. Given $k\in
\mathds{N}$ and $\mathcal{Q}:=\{q_1,...,q_k\}\subset\mathbb{Z}$ such
that $q_1<q_2<...<q_k$, we fix $N_0$ as above and $n\geq
N=\max\{k,N_0\}$. Then we apply $k$ times the Lemma~\ref{Jairo},
using the numbers $t_1=0$, and, if $k>1$,

$$t_2=\frac{q_2-q_1}{q_k-q_1},\,\,...\,\,,t_{k-1}=\frac{q_{k-1}-q_1}{q_k-q_1},\,\,t_k=1.$$
This way we get, for $(x_m)_{m\in\mathbb{Z}}$, a finite set $\{r_1,
..., r_k\} \subset \{0,1,...,n\}$ such that:
\begin{enumerate}
\item [(i)] $\sigma^{r_i}((x_m)_{m\in\mathbb{Z}})\in\Gamma;$
\item [(ii)] $|\frac{r_i}{n}-t_i|<\overline{\epsilon}$.
\end{enumerate}
Item (i) means that, for all $i=1,...,k,$

$$x_{r_i}=1, \text{ for all }i=1,...,k,$$
which is equivalent to say that

$$r_i\in\mathscr{N}, \text{ for all }i=1,...,k.$$

\bigskip

\noindent Besides, if $k>1$, then $r_i \not= r_j$ if $i \not= j$.
Indeed, by item (ii), for each $i \in \{1,...,k-1\}$, we have
$$n\frac{q_i-q_1}{q_k-q_1} - n\overline{\epsilon} < r_i <n\frac{q_{i}-q_1}{q_k-q_1} + n\overline{\epsilon}$$
$$n\frac{q_{i+1}-q_1}{q_k-q_1} - n\overline{\epsilon} < r_{i+1} < n\frac{q_{i+1}-q_1}{q_k-q_1} +
n\overline{\epsilon}$$ and

\bigskip

\noindent \emph{Claim:}
$$n\frac{q_{i+1}-q_1}{q_k-q_1} - n\overline{\epsilon} > n\frac{q_{i}-q_1}{q_k-q_1} + n\overline{\epsilon}.$$

\noindent This means that the intervals where $r_i$ and $r_{i+1}$
live are disjoint, and therefore these numbers cannot be equal.

\bigskip

The last inequality is a consequence of the choice
$\overline{\epsilon}\leq \frac{1}{2(q_k-q_1)}$. In fact, taking into
account that $\mathcal{Q}\subset\mathbb{Z}$, from it, we get:
$$2n\overline{\epsilon} < n\left(\frac{1}{q_k-q_1}\right) \leq n\left(\frac{q_{i+1}-q_i}{q_k-q_1}\right)= n\left(\frac{q_{i+1}-q_1}{q_k-q_1}- \frac{q_i-q_1}{q_k-q_1}\right)$$
as wished. $\square$

\bigskip
Finally, $\{r_i\}_{i=1}^k$ is an $n$-scale of $\mathcal{Q}$
$\epsilon$-contained in $\mathscr{N}$ due to the inequality
$\overline{\epsilon}<\epsilon$, the judicious choice of the $t_i$'s
and item (ii). $\square$

\bigskip

\flushleft

{\bf M\'ario Bessa} \ \  (bessa@fc.up.pt)\ \ FCUP, Rua do Campo
Alegre, 687, 4169-007 Porto, Portugal \emph{ and } ESTGOH - IPC, Rua
General Santos Costa,\ 3400-124, Oliveira do Hospital, Portugal.

{\bf Maria Carvalho} \ \  (mpcarval@fc.up.pt)\ \ CMUP, Rua do Campo
Alegre, 687, 4169-007 Porto, Portugal

\end{document}